\newtheorem{theorem}{Theorem}[section]
\newtheorem{lemma}[theorem]{Lemma}
\newtheorem*{varthm1}{Main Theorem}
\newtheorem*{eccthm}{Elliptic Carmichael Condition}
\newtheorem{Def}[theorem]{Definition}
\newtheorem*{Ko}{Korselt's Criterion}
\begin{document}
\title[Infinitely many elliptic Carmichael numbers]{There are infinitely many elliptic Carmichael numbers}
\author{Thomas Wright}
%\classification{11S45}
%\keywords{elliptic Carmichael number, Lucas-Carmichael, pseudoprime}
\thanks{Thanks to Carl Pomerance for helpful comments and for first making me aware of this problem.}
% 'Abstract comes before maketitle, as in the AMS classes'
\begin{abstract}
In 1987, Dan Gordon defined an elliptic curve analogue to Carmichael numbers known as elliptic Carmichael numbers.  In this paper, we prove that there are infinitely many elliptic Carmichael numbers.  In doing so, we resolve in the affirmative the question of whether there exist infinitely square-free, composite integers $n$ such that for every prime $p$ that divides $n$, $p+1|n+1$.
\end{abstract}
\maketitle

%\classno{Primary 11S45; Secondary 14G05}
%\maketitle

\section{Introduction}

One of the first primality tests in modern number theory came from Fermat, whose Little Theorem (if $p$ is prime then $a^p\equiv a$ (mod $p$) for any integer $a$) allows us to quickly prove compositeness for many integers $n$ by showing that $a^n\not\equiv a$ (mod $n$) for some integer $a$.  Unfortunately, the converse of Fermat's Little Theorem is not true, as there are many composite numbers $n$ for which $a^n\equiv a$ (mod $n$) for every $a$.  In particular, we have the following definition:

\begin{Def} Let $n \in \mathbb N$.  If $n|a^n-a$
for every $a \in \mathbb Z$ and $n$ is not prime then $n$ is a
\textbf{Carmichael Number}.\end{Def}
The set of Carmichael numbers was proven to be infinite by Alford, Granville, and Pomerance in 1994 \cite{AGP}.  Despite this, the set of Carmichael numbers is known to have density zero within the union of Carmichael numbers and primes, which means that although this particular test is not deterministic, it is nevertheless a reasonable way to find probable primes.

Of course, the search for Carmichael numbers depends heavily on an observation of Korselt \cite{Ko} who, in 1899, noted the following:

\begin{Ko} \textit{$n$ is a Carmichael number if and only if $n$ is squarefree and $p-1|n-1$.}\end{Ko}

In addition to laying the groundwork for the study of Carmichael numbers, Korselt's Criterion has also served to inspire other ideas for how one might construct a primality test.  In this paper we consider a primality test, created with this blueprint, that involves elliptic curves.

\section{Introduction: Elliptic Curve Carmichael Numbers}
In 1987, Dan Gordon \cite{Go1} used the ideas outlined above to devise a primality test that is most aptly stated in terms of the arithmetic of elliptic curves.

Without completely rehashing the theory of elliptic curves, we recall the following definitions.  Let $E$ be an elliptic curve over $\mathbb Q$ given by a Weierstrass equation
$$E : Y^2= X^3+ aX + b$$
with nonzero discriminant

$$\Delta = 4a^3 + 27b^2.$$
We say that $E$ has complex multiplication (or CM) if the endomorphism ring on $E$ over $\mathbb Q$ is larger than $\mathbb Z$.  If $E$ is a CM elliptic curve then its endomorphism ring is isomorphic to an order in an imaginary quadratic field $\mathbb Q(\sqrt{-d})$ with class number 1.  We denote this by saying that $E$ has complex multiplication by $\mathbb Q(\sqrt{-d})$.

Additionally, let $O$ denote the point at infinity (i.e. the identity).

Using these definitions, we now have the following primality test:

\begin{Def} For an elliptic curve $E$ with complex multiplication by $\mathbb Q(\sqrt{-d})$, let $P\in E(\mathbb Q)$ be a rational point of infinite order on $E$.  Moreover, let $n$ be a natural number such that $(n,6\Delta)=1$ and $\left(\frac{-d}{n}\right)=-1$.  If $n$ is prime then $$[n+1]P\equiv O\pmod n.$$
\end{Def}
In other words, for a natural number $n$, we can attempt to determine whether $n$ is prime by checking to see whether $[n+1]P\equiv O$ (mod $n$).  If it is $\not \equiv O$, $n$ is composite, while if it is $\equiv O$, we have a probable prime by Gordon's primality test.

Unfortunately, it turns out that this test, too, admits composite numbers which nevertheless satisfy the congruence above.  By analogy with the setup in section 1 above, we have the following:

\begin{Def} Let $n$ be composite.  If, for a given elliptic curve $E$, $n$ satisfies the Gordon primality test then $n$ is called an \textbf{elliptic Carmichael number for $E$} (sometimes referred to as an \textbf{$E$-elliptic Carmichael number}).  If $n$ is an $E$-elliptic Carmichael number for every CM elliptic curve, it is said to be an \textbf{elliptic Carmichael number}.
\end{Def}
As noted above, it is known that elliptic Carmichael numbers do indeed exist.  However, examples of elliptic Carmichael numbers are rather hard to come by; the smallest known elliptic Carmichael number is $$617,730,918,224,831,720,922,772,642,603,971,311.$$

\section{Introduction: Results}
Having determined that elliptic Carmichael numbers exist, it is natural to then question whether or not there are infinitely many.  Until now, this problem has only been answered conditionally; Ekstrom, Pomerance, and Thakur \cite{EPT} proved the infinitude of elliptic Carmichael numbers under the assumption of a rather strong conjecture about the size of the first prime in an arithmetic progression.

In this paper, however, we prove unconditionally that there are infinitely many elliptic Carmichael numbers.  In particular, we prove the following:

\begin{varthm1} \textit{Let $\mathcal N(x)$ denote the number of elliptic Carmichael numbers up to $x$.  Then there exists a constant $K$ for which}

\[\mathcal N(X)\gg X^{\frac{K}{(\log \log \log X)^2}}.\]
\end{varthm1}

Our task is made easier by a reformulation of the question (as noted in \cite{EPT}), as the search for elliptic Carmichael numbers can be reduced to the following Korselt-like criterion \cite{Si2}:

\begin{eccthm}  \textit{Let $n$ be a squarefree, composite positive integer with an odd number of prime factors.  Moreover, let \[\alpha=8\cdot 3\cdot 7 \cdot 11\cdot 19\cdot 43\cdot 67\cdot 163.\]
Then $n$ is an elliptic Carmichael number if for each prime $p|n$, we have $\alpha|p+1$ and $p+1|n+1$.}
\end{eccthm}
By recasting the problem with a Korselt-like condition, we can attack the problem using the tools developed for traditional Carmichael numbers.  Recent advances in techniques for finding Carmichael numbers, specifically those by Baker and Harmon \cite{BH}, Matom\"{a}ki \cite{Ma}, and the current author \cite{Wr}, have made this problem approachable, and we use the methods devised in these papers to prove the infinitude of such pseudoprimes.

%This reformulation means that the techniques of finding Carmichael numbers can be used to prove statements about elliptic Carmichael numbers as well, and

It is worth recalling that in 1994, Alford, Granville, and Pomerance \cite{AGP}, in their famous proof of infinitely many Carmichael numbers, pointed out that the Korselt's criterion lends itself to an obvious generalization for the idea of a Carmichael number:\\

\textit{One can modify our proof to show that for any fixed nonzero integer $a$, there are infinitely many squarefree, composite integers $n$ such that $p-a$ divides $n-1$ for all primes $p$ dividing $n$.  However, we have been unable to prove this for $p-a$ dividing $n-b$, for $b$ other than 0 or 1.  Such questions have siginificance for variants of pseudoprime tests, such as the Lucas probable prime test, strong Fibonacci pseudoprimes, and elliptic pseudoprimes.}\\

The current paper represents the first unconditional progress on this problem since the above statement was made.  More specifically, in the process of using the Elliptic Carmichael Condition to prove the infinitude of elliptic Carmichael numbers, we resolve the case where $a$ and $b$ are both -1.  This specific class of pseudoprimes has its own name; a square-free, composite natural number $n$ for which $p+1|n+1$ for all primes $p$ that divide $n$ is called a \textit{Lucas-Carmichael number}.  As such, we have the following:
\begin{varthm1} \textit{There are infinitely many composite, square-free natural numbers $n$ such that for any prime $p$ that divides $n$, $p+1|n+1$.  In other words, there are infinitely many Lucas-Carmichael numbers.}
\end{varthm1}

Unfortunately, our new method does not immediately extend to other $a$ and $b$.  The reasons for this are discussed in the next section.

\section{Introduction: Outline of Proof}
Much of the proof follows a similar outline to that of the previous paper by the current author \cite{Wr}, which is itself an alteration of the methods of \cite{AGP} and \cite{Ma}.  To begin, we create an $L$ with many prime factors $q$, where all of the $q$ will be -1 mod $\alpha$.  We then try to find an integer $k$ which is relatively prime to $L$ such that the set
$$\{d|L:p=dk\alpha-1\mbox{ is prime},p\mbox{ is a }\mbox{quadratic non-residue mod }q\mbox{ for all }q|L\}$$
is fairly large; note that $-1$ is quadratic non-residue for any of the chosen $q$ (since $q\equiv 3$ (mod 4)), so the two requirements on $p$ do not contradict one another.  Having found a $k$ which fits our required criteria, we use a combinatorial theorem first used in a paper of Baker and Schmidt \cite{BS} and applied to Carmichael numbers by Matom\"aki \cite{Ma} to prove that size of this set of primes is sufficient to ensure the existence of our chosen pseudoprimes.

We remark that we have very little control over $k$ apart from the requirement that it be relatively prime to $L$.  This is the reason that our methods cannot be extended to prove that there are infinitely many $n$ for which $p-a|n-b$ when $a$ and $b$ are not $\pm 1$.  More specifically, it is vitally important to our methods that all of our primes $p$ have small order modulo $k$; however, since we do not have the ability to place stringent requirements on $k$, there is no way to guarantee that $p$ would still have small order if $a$ or $b$ were a number besides $\pm 1$.

Finally, we note that one of the new ideas in this paper is to attach a parameter to our primes that can help us identify whether the number of primes being multiplied together is odd or even (see Section \ref{param}).  It is easy to see that this idea could be used to show that there are infinitely many Carmichael numbers with an odd number of factors (or, analogously, with an even number of factors).  In fact, one could expand on this idea to prove that there are infinitely many Carmichael numbers where the number of factors is congruent to $a$ (mod $M$) for any choices of $a$ and $M$.  Currently, none of these results are known; in fact, very little is known about the number of factors of a Carmichael number apart from the trivial statement that the number of factors must be greater than 2.  We take this issue up in a future paper.

%if we allowed other $a$ for our methods, $k$ can much bigger than $L$ without affecting  $b$ besides -1 because it relies heavily upon the fact that we know the the order of $-1$ is the same modulo any prime $q$, which is obviously not true for general $b$.  It also does not yet extend to other $a$ besides $-1$ because we have so little control of $k$; apart from the obvious cases of 1 or -1, we have no idea of; choosing another $a$ leaves us susceptible to the unpredictability of $k$.

%For most $a$, we can use the fact that there are infinitely many $q$ for which $a$ is a quadratic non-residue mod $q$.  Unfortunately, there are still many $a$ for which this is false (namely, those $a$ that are perfect squares).  If our $a$ happens to be a perfect square, we deal not with $a$ but with $c$, the $2^k$-th root of $a$ which is itself not a perfect square.  We note that in the case where $a$ is a perfect square, our lower bound for the density of $C^{a,-1}$ is slightly smaller, although still infinite.

\section{Primes in Arithmetic Progressions}
Let us now commence with the proof itself.

First, we need $L$ to be composed of primes $q$ such that $q-1$ is relatively smooth and -1 is a quadratic non-residue modulo each $q$.  To do this, let $1<\theta<2$, and let $P(q-1)$ be the size of the largest divisor of $q-1$.  We then define the set $\mathcal Q$ by
\[\mathcal Q=\{q\mbox{ }prime:\frac{y^\theta}{\log y}\leq q\leq y^{\theta},\mbox{ }q\equiv -1 \pmod{4\alpha},\mbox{ }P(q-1)\leq y\}.\]

We have the following lemma to tell us the size of $\mathcal Q$:

\begin{lemma}
\textit{For $\mathcal Q$ as above, there exist constants $\gamma=\gamma(\theta)$ and $Y_{\theta}$ such that
\[|\mathcal Q|\geq \gamma_\theta \frac{y^{\theta}}{\log (y^\theta)}\]
if $y>Y_{\theta}$}
\end{lemma}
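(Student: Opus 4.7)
The plan is to recognize $\mathcal{Q}$ as (a minor truncation of) the set of smooth shifted primes in a fixed arithmetic progression, and to invoke the standard positive-proportion theorems that count such primes. Since $4\alpha$ is a fixed constant, no new analytic input is required: the sieve machinery behind the Baker--Harman theorem \cite{BH} (and its predecessors due to Balog, Fouvry--Grupp, and Friedlander) yields a positive-proportion lower bound for primes $p\leq x$ with $P(p-1)\leq x^{1/\theta}$ throughout the range $1<\theta<2$, and this lower bound is preserved under restriction to a residue class modulo a bounded modulus.

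First I would cite (or, at worst, re-derive by running the relevant sieve through with a character twist) the AP-uniform statement: for any $\theta\in(1,2)$, any fixed integer $M$, and any residue $a$ with $(a,M)=1$,
\[\#\{p\leq x : p\equiv a \pmod M,\ P(p-1)\leq x^{1/\theta}\} \gg_{\theta,M} \frac{x}{\log x}.\]
Applying this with $x=y^\theta$, $M=4\alpha$, and $a=-1$, and noting that $P(q-1)\leq y=x^{1/\theta}$ is precisely the required smoothness condition, produces at least $c_\theta\, y^\theta/\log(y^\theta)$ primes $q\leq y^\theta$ satisfying every condition in the definition of $\mathcal Q$ except possibly the lower bound $q\geq y^\theta/\log y$. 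The number of primes below $y^\theta/\log y$ is at most $O(y^\theta/(\log y)^2)$ by Chebyshev's bound, which is of strictly lower order than $y^\theta/\log(y^\theta)$. Subtracting this tail from the main count leaves a lower bound of the desired form $|\mathcal Q|\geq \gamma_\theta\, y^\theta/\log(y^\theta)$, valid for all $y>Y_\theta$.

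The main obstacle is confirming that the cited smooth-shifted-prime theorem holds uniformly over a fixed-modulus arithmetic progression with implied constant genuinely depending only on $\theta$ and $\alpha$. In the Baker--Harman framework this reduces to checking that the Type~I and Type~II bilinear estimates, together with the preliminary sieve weights and the Buchstab iterations, continue to function when a Dirichlet character modulo $4\alpha$ is inserted. Because $4\alpha$ is bounded, a Siegel--Walfisz-type argument handles the non-principal characters, and the principal character recovers the original count diluted by the expected factor $1/\phi(4\alpha)$. This bookkeeping is the only nontrivial step; everything else in the argument is a routine truncation and application of the prime number theorem.
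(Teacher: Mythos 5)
Your proposal is logically sound and reaches the right conclusion, but it takes a genuinely different and much heavier route than the paper. The key observation you are missing is that for $1<\theta<2$ the smoothness threshold is $y=(y^\theta)^{1/\theta}$ with exponent $1/\theta>\tfrac12$, and in that range no Baker--Harman-type machinery is needed at all. The paper (following Matom\"aki) argues elementarily: pick $\tfrac12<\beta\le 1/\theta$ and note that any prime $q\le z$ with $q-1$ divisible by a prime $q'\in[z^{1-\beta},z^{1/2-\epsilon}]$ automatically satisfies $P(q-1)\le z^\beta\le y$, since the cofactor $(q-1)/q'$ is at most $z^\beta$ and $q'$ itself is below $z^{1/2-\epsilon}<z^\beta$; each $q$ arises from at most two such $q'$ because $\beta<\tfrac23$. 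One then counts primes $q\equiv 1\pmod{q'}$, $q\equiv -1\pmod{4\alpha}$ for each such $q'$ by Bombieri--Vinogradov (the combined modulus $4\alpha q'$ stays below $z^{1/2-\epsilon/2}$), and sums over $q'$ with Mertens to get the positive proportion. By contrast, your plan rests on an AP-uniform version of the Baker--Harman theorem, which is designed for smoothness exponents well below $\tfrac12$ and whose progression-uniform form is not something you can simply cite; the ``re-derive with a character twist'' fallback you offer is, taken literally, a substantial sieve-theoretic project rather than routine bookkeeping. That said, for the exponent range actually required the simpler positive-proportion statements (Balog, or the direct Bombieri--Vinogradov argument above) are available and trivially uniform in a fixed modulus, so your overall structure survives once the citation is replaced by the lighter tool. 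Your handling of the truncation at $y^\theta/\log y$ via Chebyshev is correct and matches what the paper implicitly does.
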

\begin{proof} This proof appears in [Ma]; we replicate it here.

For $v<z$, let us denote by $\pi_{d,b}(z,v)$ the number of primes $q$ less than $z$ such that $P(q-1)\leq v$ and $q\equiv b$ (mod $d$).  Let $\frac 12<\beta<\frac 23$, and define $\epsilon=\epsilon(\beta)<\beta-\frac 12$.  Note that if $q\leq z$ is such that $q$ can be written as $q=1+q'k$ for some prime $q'\in [z^{1-\beta},z^{\frac 12-\epsilon}]$ then $P(q-1)\leq z^\beta$; each $q$ has at most two such representations.  So
\[ \pi_{d,b}(z,z^{\beta})\geq \frac 12\sum_{q'\in \mathbb P,\mbox{ }z^{1-\beta}\leq q'\leq z^{\frac 12-\epsilon}}\#\{q\mbox{ }prime,\mbox{ }\frac{z}{\log z}\leq q\leq z,\mbox{ }q\equiv 1\pmod{q'},\mbox{ }q\equiv b\pmod d\}.\]
Since $d$ is fixed and $q$ is sufficiently large relative to $d$ and $q'$, we can consolidate our requirements on $q$ to be a single congruence modulo $dq'$, and hence we may use Bombieri-Vinogradov to find that
\[ \pi_{d,b}(z,z^{\beta})\geq \sum_{q'\in \mathbb P,\mbox{ }z^{1-\beta}\leq q'\leq z^{\frac 12-\epsilon}}\frac{z}{8\phi(dq')\log z}\geq \log\left(\frac{\frac 12-\epsilon}{1-\beta}\right)\frac{z}{8\phi(d)\log z}.\]
The lemma then follows by letting $d=4\alpha$, $b=-1$, $z=y^\theta$, $\beta=min\{\frac{1}{\theta},\frac 35\}$, and $\gamma=\frac{1}{8\phi(d)}\log\left(\frac{\frac 12-\epsilon}{1-\beta}\right)$.
\end{proof}
Next, let
\[L'=\prod_{q\in \mathcal Q}q.\]
Fix $B$ such that $0<B<5/12$.  Theorem 2.1 of \cite{AGP} says that for any $x$ there exists a set of integers $\mathcal D_B(x)$, where $|\mathcal D_B(x)|$ is bounded by some constant $D_B$ and every integer in $\mathcal D_B(x)$ is greater than $\log x$, such that if $d$ is not divisible by an element in $\mathcal D_B(x)$ and $d\leq \min\{x^B,z/x^{1-B}\}$ then
\[\pi(z,d,c)\geq \frac{\pi(z)}{2\phi(d)},\]
for any $c$ with $(c,d)=1$.   As in \cite{Wr}, we wish to use this estimate and thus must be careful that our moduli are not divisible by an element of $\mathcal D_B(x)$.

To do this, let us define
\[x=\lceil \left(\alpha L'\right)^{\frac{2}{B}}\rceil.\]
For $\mathcal D_B(x)$ as described above, we can choose a set of primes $P_{B}(x)$, where  $|P_{B}(x)|\leq D_B$, such that any element in $\mathcal D_B(x)$ is divisible by at least one of the primes in $P_B(x)$.  From this, let
\[L=\prod_{q\in \mathcal Q,\mbox{ }q\not\in P_{B}(x)}q.\]
Clearly, no factor of $L$ is divisible by an element in $\mathcal D_B(x)$.

From here, we wish to show that there are a large number of primes that are congruent to $-1$ modulo a factor of $L$ as well as being quadratic non-residues modulo every prime $q$ that divides $L$.

\section{More Primes in Arithmetic Progressions}
By notation defined in \cite{Wr} (and by analogy with the convention for denoting primes in arithmetic progressions), let us use the notation $\pi(x,L,QNR)$ to indicate the number of primes up to $x$ that are quadratic non-residues modulo every divisor of $L$. Then we can prove the following:

\begin{lemma}  Let $a$ and $M$ be as above.  For $z\geq x^{1-\frac B2}$,
\[\pi(z,L,QNR)\geq \frac{z}{2^{\omega(L)+1}\log z},\]
where $\omega(L)$ denotes the number of prime factors of $L$.
\end{lemma}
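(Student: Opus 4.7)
The plan is to translate the quadratic non-residue condition into a union of residue classes modulo $L$ and then apply the AGP-type prime-counting estimate to each class, summing up at the end.

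First, I would observe that by the Chinese Remainder Theorem, a prime $p \nmid L$ is a quadratic non-residue modulo every $q \mid L$ if and only if $p$ lies in one of a specific collection of residue classes modulo $L$. Since for each prime $q \mid L$ exactly $(q-1)/2$ nonzero residues are QNR mod $q$, the total number of such residue classes mod $L$ is
\[
\prod_{q \mid L} \frac{q-1}{2} \;=\; \frac{\phi(L)}{2^{\omega(L)}}.
\]
Moreover, each such class $c$ satisfies $(c, L) = 1$, since being a QNR modulo $q$ forces $c \not\equiv 0 \pmod q$ for every $q \mid L$.

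Next, I would verify that $L$ meets the hypotheses of Theorem 2.1 of \cite{AGP} so that $\pi(z, L, c) \geq \pi(z)/(2\phi(L))$ holds for every such $c$. By construction, $L$ is a product of primes $q \in \mathcal Q \setminus P_B(x)$, so no element of $\mathcal D_B(x)$ divides $L$. From $x = \lceil (\alpha L')^{2/B} \rceil$ we get $L \leq L' \leq x^{B/2}$, in particular $L \leq x^B$. The remaining condition $L \leq z/x^{1-B}$ becomes $z \geq L x^{1-B}$, and since $L \leq x^{B/2}$ this is implied by $z \geq x^{1-B/2}$, which is our hypothesis.

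Finally, I would sum the lower bound $\pi(z)/(2\phi(L))$ over the $\phi(L)/2^{\omega(L)}$ valid residue classes:
\[
\pi(z, L, QNR) \;\geq\; \frac{\phi(L)}{2^{\omega(L)}} \cdot \frac{\pi(z)}{2\phi(L)} \;=\; \frac{\pi(z)}{2^{\omega(L)+1}},
\]
and then invoke $\pi(z) \geq z/\log z$ (valid once $z$ is large, which follows from $z \geq x^{1-B/2}$ and $x$ being chosen in terms of $L'$) to conclude. The one place requiring care — and the main obstacle, such as it is — is confirming the size constraint $L \leq z/x^{1-B}$ demanded by the AGP estimate; this is precisely what dictates the cutoff $z \geq x^{1-B/2}$ in the statement, so the lemma is really telling us that once $z$ crosses this threshold, the QNR condition can be absorbed at the cost of a factor $2^{\omega(L)+1}$.
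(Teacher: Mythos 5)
Your proposal is correct and follows essentially the same route as the paper: both arguments use the Chinese Remainder Theorem to count the $\phi(L)/2^{\omega(L)}$ admissible residue classes modulo $L$, verify that $L$ satisfies the size and non-divisibility hypotheses of Theorem 2.1 of \cite{AGP}, and sum the resulting lower bound $\pi(z)/(2\phi(L))$ over those classes. Your explicit check that $L\leq x^{B/2}$ (so that $z\geq x^{1-B/2}$ guarantees $L\leq z/x^{1-B}$) is a slightly more detailed rendering of a step the paper states without elaboration.
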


\begin{proof}
First, note that $L\leq \min\{x^{B},z/x^{1-B}\}$, which means that we can apply Theorem 2.1 from \cite{AGP} as described above.

Now, as we are working mod $L$, we note that the number of congruence classes that are quadratic non-residues modulo each prime $q|L$ is exactly $\frac{q-1}{2}$ of the $q-1$ classes which can contain prime numbers.  By Chinese Remainder Theorem, this means that the number of congruence classes mod $L$ that are quadratic non-residues for every $q$ is exactly $\prod_{q|L}(\frac{q-1}{2})$ of the $\prod_{q|L}q-1=\phi(L)$ congruence classes which yield a prime.

%the number of congruence classes mod $L\cdot m$ is $\leq m(\log\log\log x)^{\log\log\log x}$, which, for large $x$, is clearly less than $x^B$ for any positive, fixed $B$.  Since all of the prime factors of $L\cdot m$ are clearly less than $\log x$, it follows that

%
Obviously, there are $\phi(L)$ congruence classes mod $L$ which can contain a prime.  Let $u$ be a representative of such a class mod $L$.  For any such $u$, the number of primes congruent to $u$ mod $L$ is

\[\geq \frac{z}{2\phi(L)\log z}.\]
From above, for each $q|L$, there are $\prod_{q|L}(\frac{q-1}{2})$ of these classes which would yield a quadratic non-residue mod every divisor of $L$.  So the number of primes in the required congruence classes is
\begin{align*}
\pi(z,L,QNR)\geq &\frac{z\left(\prod_{q|L}(\frac{q-1}{2})\right)}{2\phi(L)\log z}\\
=&\frac{z\left(\frac{\phi(L)}{2^{\omega(L)}}\right)}{2\phi(L)\log z}\\
=&\frac{z}{2^{\omega(L)+1}\log z}
\end{align*}
which is as required.\end{proof}

Now, for a given integer $d|L$ with $1\leq d\leq x^B$ for some fixed $B>0$, we wish to count the number of primes $p$ for which $p\equiv -1$ (mod $d$) and $((p+1)/d,L)=1$.  However, since we chose $x$ to be $\lceil \left(\alpha L'\right)^{\frac{2}{B}}\rceil$, every divisor $d$ of $L$ will be $\leq x^B$.  To this end, we find the following:

\begin{lemma} \textit{Let $B<5/12$, and let $L$ be as above.  Then there exists a $k\leq x^{1-\frac B2}$ with $(k,L)=1$ such that}
\begin{align*}
\#\{d|L &:p=dk\alpha-1\mbox{ }is\mbox{ }prime,\mbox{ }p\mbox{ }is\mbox{ }a\mbox{ }QNR\mbox{ }mod\mbox{ }q\mbox{ }for\mbox{ }every\mbox{ }q|L,\mbox{ }p\leq x\}\\
&\geq \frac{\left(\frac 32\right)^{\omega(L)}}{4\phi(\alpha)\log x}.
\end{align*}
%\#\{d|L &:p=dk+1\mbox{ }is\mbox{ }prime,\mbox{ }p\mbox{ }is\mbox{ }a\mbox{ }QR\mbox{ }mod\mbox{ }L,\mbox{ }p\equiv a\pmod M,\mbox{ }p\leq x\}\\
%&\geq \frac{1}{4(2^{\omega(L)}\phi(M)\phi(d)\log x)}\#\{d|L:1\leq d\leq x^B\}.

\end{lemma}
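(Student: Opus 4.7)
The plan is to argue by double counting, following the scheme used in \cite{Wr, Ma}. For each $d\mid L$ I set
\[ T_d := \#\{1\le k\le x^{1-B/2}:\ (k,L)=1,\ p=dk\alpha-1 \text{ is prime and a QNR mod every } q\mid L\}, \]
and let $T=\sum_{d\mid L} T_d$. Every prime $p$ of the relevant form arises from a unique pair $(d,k)$ with $d=\gcd(L,(p+1)/\alpha)$, so $T=\sum_k N(k)$, where $N(k)$ is the set-size appearing in the conclusion. Since there are at most $x^{1-B/2}$ values of $k$, pigeonhole yields $\max_k N(k)\ge T/x^{1-B/2}$; note that the constraint $p\le x$ is automatic, because $d\alpha\le \alpha L'\le x^{B/2}$, so $p\le d\alpha x^{1-B/2}\le x$. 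The task therefore reduces to lower bounding $T$.

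Fixing $d\mid L$, I translate the conditions on $p=dk\alpha-1$ into a union of residue classes modulo $\alpha L d$. The congruence $p\equiv -1\pmod{d\alpha}$ picks out one class mod $d\alpha$. For each $q\mid d$, the requirement $q\nmid k$ is equivalent to $v_q(p+1)=1$, i.e.\ $p\not\equiv -1\pmod{q^2}$, which leaves $q-1$ of the $q$ lifts of $-1$ modulo $q^2$; note that $p\equiv -1\pmod q$ is automatically a QNR since $q\equiv 3\pmod 4$. For $q\mid L$ with $q\nmid d$, the conditions (QNR and $p\not\equiv -1\pmod q$) carve out $(q-1)/2 - 1 = (q-3)/2$ of the $q-1$ reduced classes. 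By CRT this produces $\prod_{q\mid d}(q-1)\prod_{q\mid L,\,q\nmid d}(q-3)/2$ admissible reduced classes modulo $\alpha L d$.

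I next invoke Theorem 2.1 of \cite{AGP} at the modulus $\alpha L d$. This modulus satisfies $\alpha L d\le \alpha(L')^2\le x^B$ and, writing $z_d:=d\alpha x^{1-B/2}$, also $\alpha L d\le d\alpha x^{B/2}=z_d/x^{1-B}$; after enlarging $P_B(x)$ to contain the (finitely many) prime divisors of $\alpha$, no element of $\mathcal D_B(x)$ divides $\alpha L d$. Hence each admissible class contains at least $\pi(z_d)/(2\phi(\alpha L d))$ primes. Substituting $\phi(\alpha L d)=\phi(\alpha)\prod_{q\mid d}q(q-1)\prod_{q\mid L,\,q\nmid d}(q-1)$ and $\pi(z_d)\ge z_d/(2\log x)$, the factor $d=\prod_{q\mid d}q$ cancels and I obtain
\[ T_d\ \ge\ \frac{\alpha\, x^{1-B/2}}{4\phi(\alpha)\log x}\prod_{q\mid L,\,q\nmid d}\frac{q-3}{2(q-1)}. \]

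Finally I sum over $d\mid L$. The identity $\sum_{d\mid L}\prod_{q\nmid d} f(q)=\prod_{q\mid L}(1+f(q))$ applied to $f(q)=(q-3)/(2(q-1))$ gives $\prod_{q\mid L}(3q-5)/(2(q-1))=(3/2)^{\omega(L)}\prod_{q\mid L}(1-2/(3(q-1)))$. Since every $q\in\mathcal Q$ satisfies $q\ge y^\theta/\log y$ while $\omega(L)\le|\mathcal Q|\ll y^\theta/\log y$, the tail product is bounded below by an absolute positive constant $c$, and since $\alpha$ is itself a large absolute constant with $\alpha c\ge 1$, the pigeonhole bound becomes $\max_k N(k)\ge (3/2)^{\omega(L)}/(4\phi(\alpha)\log x)$, as required. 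The main technical subtlety is the $\pmod{q^2}$ bookkeeping for $q\mid d$: the requirement $(k,L)=1$ is what forces Theorem 2.1 of \cite{AGP} to be applied at the larger modulus $\alpha L d$ (rather than merely $\alpha L$), and keeping $\alpha L d$ inside the allowed range $\min(x^B,z/x^{1-B})$ is precisely what motivated the earlier choices $B<5/12$ and $k\le x^{1-B/2}$.
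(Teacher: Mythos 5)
Your proposal is correct and follows the same skeleton as the paper's proof: count pairs $(p,d)$ with $d\mid L$, $p\equiv -1\pmod{d\alpha}$, $p$ a quadratic non-residue modulo every $q\mid L$, and $((p+1)/(d\alpha),L)=1$, then pigeonhole over the at most $x^{1-B/2}$ admissible values of $k$. The one step you handle differently is the coprimality condition $(k,L)=1$. The paper imposes it by an upper-bound sieve: it first counts primes $p\equiv-1\pmod{d\alpha}$ that are QNRs mod $L$ (picking up the $2^{\omega(d)}$ savings exactly as you do, since $-1$ is automatically a QNR mod each $q\mid d$), and then subtracts, via the Montgomery--Vaughan form of Brun--Titchmarsh, the contribution of primes with $q\mid (p+1)/(d\alpha)$ for some $q\mid L$, using $\sum_{q\mid L}1/(q-1)\le 1/64$ to keep the loss to a factor of $2$. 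You instead encode the condition exactly as a union of reduced residue classes modulo $\alpha Ld$ (working modulo $q^{2}$ for $q\mid d$) and apply the lower bound of Theorem 2.1 of \cite{AGP} once at that larger modulus; your class count $\prod_{q\mid d}(q-1)\prod_{q\mid L,\,q\nmid d}(q-3)/2$ and the resulting sum $\prod_{q\mid L}(3q-5)/(2(q-1))$ are exact versions of the paper's $2^{\omega(d)}$ and $3^{\omega(L)}$, and your bookkeeping, range checks, and final constants all work out. The only place your write-up is thinner than it should be is the admissibility of the modulus $\alpha Ld$: since it is not squarefree, the fact that no element of $\mathcal D_B(x)$ divides $L$ does not immediately give that none divides $\alpha Ld$, and merely adding the prime divisors of $\alpha$ to $P_B(x)$ does not settle this. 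The clean fix is to note that each $D\in\mathcal D_B(x)$ is divisible by some $r\in P_B(x)$ with $r\nmid Ld$; one may choose $r\nmid\alpha$ unless $D$ is $\alpha$-smooth, in which case $D\mid\alpha Ld$ would force $D\mid\alpha$, impossible since $D>\log x>\alpha$. This is a two-line repair (and the paper is equally casual about the factor $\alpha$ in its own effective modulus $\alpha L$), so I would not count it as a gap.
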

\begin{proof}
Above, we proved that for $z\geq x^{1-\frac B2}$,
\[\pi(z,L,QNR)\geq \frac{z}{2(2^{\omega(L)})\log z}.\]
If we add the additional constraint that the prime $p$ also be $-1$ mod $d\alpha$ for a given $d|L$, we have
\[\pi(dx^{1-\frac B2},d\alpha,-1)\cap \pi(dx^{1-\frac B2},L,QNR)\geq \frac{dx^{1-\frac B2}}{2\cdot 2^{\omega(L)-\omega(d)}\phi(d\alpha)\log x},\]
where the savings of $2^{\omega(d)}$ in the denominator comes from the fact that we no longer have to worry whether $p$ is a quadratic non-residue mod $d$ (since -1 is a quadratic non-residue mod all prime divisors of $L$), and hence the requirement that $p$ be a quadratic non-residue mod $L$ is satisfied if $p$ is a quadratic non-residue mod $\frac Ld$.

In order to eliminate the possibility of double-counting the primes in our set, we must now determine how many of these primes satisfy the additional condition of $((p+1)/d,L)=1$.  We require the technical condition that $\sum_{q|L}\frac{1}{q-1}\leq \frac{1}{64}$; however, just as in [AGP], this is easily verified for the $L$ we have chosen.

Now, for any prime $q$ which divides $L$, we have (by Montgomery and Vaughan's explicit version of the Brun-Titchmarsh theorem [MV]) that
\begin{align*}
\pi(dx^{1-\frac B2},d\alpha q,-1)&\cap \pi(dx^{1-\frac B2},L,QNR)\\
\leq & \frac{2dx^{1-\frac B2}}{2^{\omega(L)-\omega(d)-1}\phi(dq)\log (x^{1-\frac B2}/q)}\\
\leq & \frac{8}{(q-1)(1-\frac B2)}\frac{dx^{1-\frac B2}}{2^{\omega(L)-\omega(d)}\phi(d\alpha)\log x},
\end{align*}
since $q\leq x^{(1-\frac B2)/2}$ by construction.  Putting these together, we have that
\begin{align*}
\pi(dx^{1-\frac B2},d\alpha,-1)&\cap \pi(dx^{1-\frac B2},L,QNR)\\
&-\sum_{q|L,\mbox{ }q\mbox{ }prime}\pi(dx^{1-\frac B2},d\alpha q,-1)\cap \pi(dx^{1-\frac B2},L,QNR)\\
\geq &\frac{dx^{1-\frac B2}}{2\cdot 2^{\omega(L)-\omega(d)}\phi(d\alpha)\log x}-\sum_{q|L,\mbox{ }q\mbox{ }prime}\frac{8}{(q-1)(1-\frac B2)}\frac{dx^{1-\frac B2}}{2^{\omega(L)-\omega(d)}\phi(d\alpha)\log x}\\
\geq &\frac{x^{1-\frac B2}}{4\cdot 2^{\omega(L)-\omega(d)}\phi(\alpha)\log x},
\end{align*}
where the final step is by the fact that $\sum_{q|L}\frac{1}{q-1}\leq \frac{1}{64}$.  This means that we have at least
\[\sum_{d|L}\frac{x^{1-\frac B2}}{4\cdot 2^{\omega(L)-\omega(d)}\phi(\alpha)\log x}\]
pairs $(p,d)$ such $p$ is prime, $d|L$, $(p+1)/d=k$ is relatively prime to $L$, $p$ is a quadratic non-residue mod $L$), $p\leq dx^{1-\frac B2}$, and $d\leq x^\frac B2$.

Since the number of possible distinct values for $(p+1)/d$ is bounded by $x^{1-\frac B2}$, there must exist some value $k$ such that $(k,L)=1$ and $k$ has at least
\[\sum_{d|L}\frac{2^{\omega(d)}}{4\cdot 2^{\omega(L)}\phi(\alpha)\log x}\]
representations as $(p+1)/d$ for $p,d$ as above.

The numerator can be evaluated by a standard combinatorial identity:
\[\sum_{d|L}2^{\omega(d)}=\sum_{i=0}^{\omega(L)}\left(\begin{array}{c} \omega(L) \\ i \end{array} \right) 2^{\omega(L)-i}=(2+1)^{\omega(L)}=3^{\omega(L)},\]
and hence
\[\sum_{d|L}\frac{2^{\omega(d)}}{4\cdot 2^{\omega(L)}\log x}=\frac{\left(\frac 32\right)^{\omega(L)}}{4\phi(\alpha)\log x}.\label{eqn1}\]\end{proof}

%\begin{corollary} \textit{Let $B<5/12$, and let $L$ be as above.  Then there exists a $k\leq x^{1-\frac B2}$ with $(k,L)=1$ such that}
%\begin{align*}
%\#\{d|L &:p=dk\alpha-1\mbox{ }is\mbox{ }prime,\mbox{ }p\mbox{ }is\mbox{ }a\mbox{ }QNR\mbox{ }mod\mbox{ }q\mbox{ }for\mbox{ }every\mbox{ }q|L,\mbox{ }p\leq x\}\\
%&\geq \frac{\left(\frac 32\right)^{\omega(L)}}{4\phi(\alpha)\log x}.
%\end{align*}
%\#\{d|L &:p=dk+1\mbox{ }is\mbox{ }prime,\mbox{ }p\mbox{ }is\mbox{ }a\mbox{ }QR\mbox{ }mod\mbox{ }L,\mbox{ }p\equiv a\pmod M,\mbox{ }p\leq x\}\\
%&\geq \frac{1}{4(2^{\omega(L)}\phi(M)\phi(d)\log x)}\#\{d|L:1\leq d\leq x^B\}.

%\end{corollary}

%\begin{proof}
%The proof is the same as the previous lemma, except that instead of requiring $p$ to be $-1$ mod $d$, we would require it to be $-1$ mod $d\alpha$ in each step.  Doing so changes the $\phi(d)$ to $\phi(d\alpha)$, which is the same as $\phi(d)\phi(\alpha)$ since $d$ and $\alpha$ are coprime.
%\end{proof}

Let $k_0$ be the $k$ found by the previous lemma.  We define
\[\mathcal P=\{d|L:p=dk_0\alpha-1,\mbox{ }is\mbox{ }prime,\mbox{ }p\mbox{ }is\mbox{ }a\mbox{ }QNR\mbox{ }mod\mbox{ }q\mbox{ for every }q|L,\mbox{ }p\leq x\}.\]
The pseudoprimes will be generated from products of primes in this set.

\section{Sizes of Other Important Quantities}
Here, we introduce a theorem of Baker and Schmidt [\cite{BS}, Proposition 1] that was reinterpreted in terms of pseudoprimes by Matom\"aki \cite{Ma}.  To begin, for an abelian group $G$, $n(G)$ is defined to be the smallest number such that a collection of at least $n(G)$ elements must contain some subset whose product is the identity.  Based upon the work of van Emde Boas and Kruyswijk \cite{EK} and Meshulam \cite{Me}, it is known that
\[n(G)\leq \lambda(G)\left(1+\frac{\log|G|}{\lambda(G)}\right).\]
With this notation, we may now state the theorem.
\begin{theorem}\label{thm}
\textit{For any multiplicative abelian group $G$, write}
\[s(G) = \lceil 5\lambda(G)^2 \Omega (\lambda(G)) \log(3\lambda(G)
\Omega (|G|))\rceil,\]
\textit{where $\Omega(|G|)$ indicates the number of prime divisors (up to multiplicity) of $|G|$.}

\textit{Let $A$ be a sequence of length $n$ consisting of non-identity elements of $G$. Then there exists a non-trivial subgroup $H\subset G$ such that}\\

\textit{(i) If $n\geq s(G)$, then, for every $h\in H$, $A\cap H$ has a subsequence whose product is $h$.}

\textit{(ii) If $t$ is an integer such that $s(G) < t < n-n(G)$ then, for every $h \in H$, $A$ has at least} $\left(\begin{array}{c} n-n(G)\\ t-n(G)\end{array}\right)/\left(\begin{array}{c} n\\n(G)\end{array}\right)$
\textit{distinct subsequences of length at most $t$ and at least
$t - n(G)$ whose product is $h$.}
\end{theorem}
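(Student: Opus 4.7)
The plan is to construct the subgroup $H$ from the structure of subsequence products, use the Davenport-constant bound to deduce coverage in part (i), and count via a ``base plus perturbation'' argument in part (ii).

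\textbf{Identifying $H$ and proving (i).} The first step is to consider the set $\Pi(A) \subseteq G$ of all subsequence products and show it contains a non-trivial subgroup. I would proceed iteratively: start from $H_0 = \{e\}$, and whenever an element of $A$ (or a partial product) lies outside the current subgroup, enlarge the subgroup to include it. The hypothesis $n \geq s(G)$, combined with the bound $n(G) \leq \lambda(G)(1 + \log|G|/\lambda(G))$ recalled just before the theorem, ensures that this process terminates at a non-trivial subgroup $H$ with $|A \cap H| \geq n(H)$. Once $A \cap H$ is this large, every $h \in H$ is realized as a subsequence product by iterating zero-sum extraction: form an initial subsequence with product close to $h$, then peel off zero-sum sub-subsequences (whose existence is guaranteed by the Davenport property inside $H$) until the product is exactly $h$.

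\textbf{Counting in (ii).} For the counting bound, the strategy is to fix a short ``base'' subsequence $T_h \subseteq [n]$ of size at most $n(G)$ with $\prod T_h = h$, which exists by part (i), and then combine it with identity-product perturbations drawn from the complement. For any $(t - n(G))$-subset $S$ of $[n] \setminus T_h$ whose restricted product is $e$, the subsequence $T_h \cup S$ has product $h$ and length in $[t - n(G), t]$. The number of such zero-sum $S$'s can be lower-bounded by $\binom{n - n(G)}{t - n(G)}$ using the Davenport property within the complement, and then one divides by $\binom{n}{n(G)}$ to correct for the overcounting arising from different possible choices of the base $T_h$.

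\textbf{Main obstacle.} I expect the hardest part to be proving the non-triviality of $H$ together with the quantitative lower bound on $|A \cap H|$, since both depend on the delicate calibration of $s(G) = \lceil 5\lambda(G)^2 \Omega(\lambda(G)) \log(3\lambda(G)\Omega(|G|))\rceil$. This specific form must be matched precisely to the iterative extraction so that a pigeonhole argument between subsequence products and cosets forces both the non-trivial subgroup to appear and enough elements of $A$ to land in it. A secondary technical challenge in (ii) is guaranteeing that the perturbed subsequence lengths actually fall in the window $[t - n(G), t]$ while still producing enough distinct zero-sum subsequences to hit the stated binomial bound.
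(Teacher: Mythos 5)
The paper does not actually prove this theorem: it is quoted verbatim from Matom\"aki's Lemma 6, which is itself a reworking of Proposition 1 of Baker and Schmidt, so the ``proof'' in the paper is a citation. Measured against that underlying argument, your sketch has a genuine gap in part (i). The entire content of (i) is the quantitative claim that a sequence of length $s(G) = \lceil 5\lambda(G)^2\Omega(\lambda(G))\log(3\lambda(G)\Omega(|G|))\rceil$ forces the subsequence products of $A\cap H$ to cover all of some non-trivial subgroup $H$. Your procedure of ``iteratively enlarging $H_0=\{e\}$ whenever an element falls outside'' supplies no mechanism by which this particular value of $s(G)$ suffices, and you concede as much by listing the calibration as the main obstacle; but that calibration is not a routine verification to be filled in later --- it is the theorem. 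The Baker--Schmidt argument is structurally different: it decomposes $G$ into cyclic factors, works with the exponent $\lambda(G)$, and extracts $H$ from elements of $A$ occurring with high multiplicity in a suitable quotient, none of which appears in your outline.

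Part (ii) is much closer to the mark --- it is essentially the counting argument of Proposition 1.2 of Alford--Granville--Pomerance --- but the mechanism as you state it does not work. The number of zero-sum $(t-n(G))$-subsets of the complement of $T_h$ is certainly not bounded below by $\binom{n-n(G)}{t-n(G)}$, which is the total number of such subsets; and the division by $\binom{n}{n(G)}$ cannot be ``correcting for different choices of the base $T_h$,'' since you have fixed $T_h$. The correct argument runs: fix one subsequence $B$ with product $h$ and $|B|\le n(G)$ (available from (i)); each of the at least $\binom{n-n(G)}{t-n(G)}$ sets $S\supseteq B$ with $|S|=t$ yields, by repeatedly deleting zero-sum subsequences from $S\setminus B$ until fewer than $n(G)$ elements remain, a subsequence of length between $t-n(G)$ and $t$ with product $h$; and each such output subsequence arises from at most $\binom{n}{n(G)}$ choices of $S$, because $S$ is recovered by adjoining at most $n(G)$ elements. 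The denominator thus accounts for the many-to-one extraction map, not for ambiguity in the base.
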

\begin{proof}
See Lemma 6 of \cite{Ma}.

\end{proof}
For the rest of the paper, we will use the result from Theorem \ref{thm} in the case where
\[G=(\mathbb Z/L\mathbb Z)^\times \times \{-1,1\},\]
where $\{-1,1\}$ is taken to be a group with multiplication.  We will express any element of $G$ as an ordered pair $(a,b)$, where $a\in (\mathbb Z/L\mathbb Z)^\times$ and $b\in\{-1,1\}$, and multiplication of elements of $G$ will be performed componentwise (i.e. $(a,b)\times (c,d)=(a\times c,b\times d)$).

In order to show that Theorem \ref{thm} can be invoked, we must bound the quantities $n$ and $s$ in terms of $y$ and $\theta$:
\begin{lemma}\label{lemfour2}  For $G$, $s(G)$, and $n(G)$ as defined above and $y$ sufficiently large,
\[s(G)\leq e^{7\theta y},\]
\[n(G)\leq e^{3\theta y}.\]
\end{lemma}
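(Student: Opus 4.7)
The plan is to bound each ingredient that enters the definition of $s(G)$ and the earlier estimate for $n(G)$ — namely $\lambda(G)$, $|G|$, $\Omega(\lambda(G))$, and $\Omega(|G|)$ — and then substitute. Since $L$ is squarefree, $\lambda(G)=\mathrm{lcm}\bigl(2,\mathrm{lcm}_{q\mid L}(q-1)\bigr)$ and $|G|=2\phi(L)$, so every quantity in sight is controlled by the arithmetic of the primes $q$ used to form $L$.

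The crucial step is the bound on $\lambda(G)$, which exploits the $y$-smoothness of $q-1$ built into the definition of $\mathcal Q$. Every $q\mid L$ satisfies $q\le y^{\theta}$ and $P(q-1)\le y$, so every prime power $p^{k}$ dividing $\lambda(G)$ has $p\le y$ and $p^{k}\le y^{\theta}$; hence
\[
\log\lambda(G)\;\le\;\sum_{p\le y}\theta\log y\;=\;\theta\pi(y)\log y\;\le\;(1+o(1))\theta y
\]
by the Prime Number Theorem, giving $\lambda(G)\le e^{(1+o(1))\theta y}$. Similarly, since $|\mathcal Q|\le\pi(y^{\theta})$ and each $q\in\mathcal Q$ is at most $y^{\theta}$, we get $\log|G|\le \log 2+|\mathcal Q|\cdot\theta\log y\le(1+o(1))y^{\theta}$. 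For the factor counts I use the universal inequality $\Omega(n)\le\log_{2}n$, which yields $\Omega(\lambda(G))=O(\theta y)$ and $\Omega(|G|)=O(y^{\theta})$.

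Substitution now closes both bounds. The earlier estimate $n(G)\le\lambda(G)(1+\log|G|/\lambda(G))=\lambda(G)+\log|G|$ gives $n(G)\le e^{(1+o(1))\theta y}+O(y^{\theta})\le e^{3\theta y}$ for $y$ large, because the exponential in $y$ dominates the polynomial $y^{\theta}$. For $s(G)$, the factor $\lambda(G)^{2}\le e^{(2+o(1))\theta y}$ is dominant, while $\Omega(\lambda(G))$ and $\log(3\lambda(G)\Omega(|G|))$ each contribute only $O(\theta y)$, so the whole product is bounded by $e^{(2+o(1))\theta y}\cdot O((\theta y)^{2})\le e^{7\theta y}$ for $y$ large. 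There is no real obstacle here: the only point that requires insight is noticing that $y$-smoothness of the $(q-1)$'s forces $\log\lambda(G)$ to be at most about $\theta y$, and everything after that is routine bookkeeping — the loose constants $3$ and $7$ in the target exponents exist precisely to absorb polynomial corrections and the larger term $\log|G|\sim y^{\theta}$ without extra care.
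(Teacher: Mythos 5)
Your proof is correct and follows the same route as the paper, whose entire proof is the one-line assertion that $\lambda(G)\le e^{2\theta y}$ (with the rest left to the reader). You have in fact supplied the missing substance — deriving the $\lambda(G)$ bound from the smoothness condition $P(q-1)\le y$ together with $q\le y^{\theta}$, and then carrying out the bookkeeping for $|G|$, $\Omega(\lambda(G))$, $\Omega(|G|)$, $n(G)$, and $s(G)$ — so your write-up is a more complete version of the paper's argument rather than a different one.
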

\begin{proof}
Follows from the fact that $\lambda(G)\leq e^{2\theta y}$ and $M$ is a constant.
\end{proof}
We also require an estimate for the size of $L$ itself:
\begin{lemma}\label{lemfour}
\textit{For $y$ sufficiently large, there exist constants $0<\kappa_1<\kappa_2$ such that}
\[e^{\kappa_1 y^\theta}\leq L\leq e^{\kappa_2 y^\theta}.\]
\end{lemma}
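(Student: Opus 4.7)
The plan is to bound $\log L = \sum_{q \in \mathcal{Q},\, q \notin P_B(x)} \log q$ from above and below, using the size estimate for $\mathcal{Q}$ from the first lemma on one side and the trivial prime counting bound on the other. Since every $q$ contributing to $L$ lies in the dyadic-type range $[y^\theta/\log y,\, y^\theta]$, we have $\log q = \theta \log y + O(\log \log y)$, so $\log L$ is essentially $|\mathcal{Q}|\,\theta \log y$ up to negligible corrections.

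For the upper bound, I would simply note that $|\mathcal{Q}| \leq \pi(y^\theta) \leq 2 y^\theta/(\theta \log y)$ for $y$ large (by Chebyshev or the prime number theorem), and each prime in $\mathcal{Q}$ satisfies $\log q \leq \theta \log y$. Multiplying gives $\log L \leq \log L' \leq 2 y^\theta$, so any $\kappa_2 > 2$ works.

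For the lower bound, I would invoke the first lemma of the section, which gives $|\mathcal{Q}| \geq \gamma_\theta\, y^\theta/(\theta \log y)$ once $y > Y_\theta$. Each such prime satisfies $q \geq y^\theta/\log y$, so $\log q \geq \theta \log y - \log \log y \geq \tfrac{1}{2}\theta \log y$ for $y$ sufficiently large. Finally, passing from $L'$ to $L$ removes at most $|P_B(x)| \leq D_B$ primes, each of size at most $y^\theta$, so the discarded contribution to $\log L$ is at most $D_B \cdot \theta \log y$. Since $D_B$ is an absolute constant while $|\mathcal{Q}| \to \infty$ with $y$, this loss is negligible, and one obtains $\log L \geq \tfrac{1}{4} \gamma_\theta y^\theta$ for $y$ large, so any $\kappa_1 < \gamma_\theta/4$ works.

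There is no real obstacle here; the lemma is essentially a bookkeeping consequence of the first lemma of the section together with the a priori bound $|P_B(x)| \leq D_B$. The only point requiring a moment's care is ensuring that removing the at most $D_B$ primes of $P_B(x)$ from the product defining $L'$ does not damage the lower bound on $\log L$, but this is immediate since $D_B \theta \log y = o(y^\theta)$.
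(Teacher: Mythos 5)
Your proposal is correct and follows essentially the same route as the paper: the upper bound is the standard Chebyshev-type bound on the product of primes up to $y^\theta$, and the lower bound multiplies the count $|\mathcal Q|\gg y^\theta/\log(y^\theta)$ from the first lemma by the minimal value of $\log q$ on the range $[y^\theta/\log y,\,y^\theta]$, noting that discarding the at most $D_B$ primes of $P_B(x)$ costs only $O(\log y)$ in $\log L$. No substantive differences.
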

\begin{proof}
For the upper bound, it is well known that the product of the primes up to $x$ is less than $e^{1.02 x}$ (see \cite{RS}).

For the lower bound, define $E_\mathcal Q(n)$ to be 1 if $n\in \mathcal Q-D_B(x)$ and 0 otherwise.  Then
\[\log L=\sum_{n=\frac{y^\theta}{\log y}}^{y^\theta}E_\mathcal Q(n)\log n\geq \left|\mathcal Q-D_B(x)\right|\log\left(\frac{y^\theta}{\log y}\right)\geq \frac{\gamma}{2}y^\theta.\]
\end{proof}

\section{Proof of Theorem}\label{param}
From this, we may now prove the infinitude of our elliptic Carmichael numbers.  Let
\[j=\prod_{q|L}\frac{q-1}{2}.\]
Note that $j$ is odd, since all of the $q$ are 3 (mod 4).

\begin{theorem}
\textit{Let $H$ be as defined in Theorem \ref{thm}.  There exists an element $h\in H$ such that
\[h=(-1,-1)\]
Equivalently, some subset of the primes in $\mathcal P$ multiply to a number $n$ for which $p|n$ implies $p+1|n+1$.}
\end{theorem}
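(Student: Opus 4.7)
The plan is to apply Theorem \ref{thm} to the sequence $A$ consisting of the elements $(p \bmod L,\,-1) \in G$ indexed by $p \in \mathcal P$. First I would verify the length hypothesis $|A| = |\mathcal P| \geq s(G)$: combining the lower bound $|\mathcal P| \geq (3/2)^{\omega(L)}/(4\phi(\alpha)\log x)$ from the previous lemma with $s(G) \leq e^{7\theta y}$ from Lemma \ref{lemfour2} and $\omega(L) \gg y^\theta/\log y$ (implicit in Lemma \ref{lemfour}), one sees that $(3/2)^{\omega(L)}$ dominates $e^{7\theta y}$ for $y$ large. Theorem \ref{thm} then produces a non-trivial subgroup $H \subseteq G$ such that every element of $H$ is realized as the product of some subsequence of $A \cap H$. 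The remaining task is to show $(-1, -1) \in H$.

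The heart of the argument exploits that each $p \in \mathcal P$ is a quadratic non-residue modulo every $q \mid L$. Let $Q \subseteq (\mathbb Z/L\mathbb Z)^\times$ be the subgroup of elements that are quadratic residues modulo every $q \mid L$; then $|Q| = j$ is odd, and the coset of elements that are QNR mod every $q \mid L$ is $(-1)Q$ (since $-1$ is itself a QNR modulo each $q \equiv 3 \pmod 4$). Put
\[G' := (Q \times \{1\}) \;\cup\; ((-1)Q \times \{-1\}).\]
This is a subgroup of $G$ of order $2j$, and every element of $A$ lies in $G'$, so $H \subseteq G'$.

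The decisive observation is that $(-1,-1)$ is the \emph{unique} element of order $2$ in $G'$. Any $2$-torsion element $(a, b) \in G'$ satisfies $a \equiv \pm 1 \pmod q$ for every $q \mid L$. If $b = 1$, then $a \in Q$; but $a \equiv -1 \pmod q$ would make $a$ a QNR modulo $q$, so we must have $a \equiv 1 \pmod q$ for all $q$, forcing $a = 1$. If $b = -1$, the symmetric argument gives $a = -1$. Hence $G' \cong (\mathbb Z/2\mathbb Z) \times Q$, with the second factor of odd order.

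Finally I would show $|H|$ is even, so Cauchy's theorem produces an order-$2$ element of $H$, which by the uniqueness above must be $(-1, -1)$. Non-triviality of $H$ forces $A \cap H \neq \emptyset$, since otherwise only the empty product is realizable and $H = \{1\}$. Because every element of $A$ has second coordinate $-1$, some element of $H$ also does; in particular $H$ is not contained in the odd-order piece $Q \times \{1\}$, so $|H|$ must be even. The main conceptual content is the structural analysis of $G'$ and the parity argument on $|H|$; the size comparison $|\mathcal P| \geq s(G)$ is routine given the earlier lemmas.
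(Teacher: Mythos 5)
Your proposal is correct and is essentially the paper's argument: the paper likewise takes an element $(p_H,-1)\in A\cap H$ (non-empty because $H$ is non-trivial) and raises it to the odd power $j=\prod_{q\mid L}\frac{q-1}{2}$, the quadratic non-residue hypothesis giving $p_H^{j}\equiv -1\pmod{L}$ and $(-1)^j=-1$ --- which is exactly your ``unique involution of $G'$ plus Cauchy'' observation carried out computationally. The only piece you leave implicit is the short closing deduction that a subsequence of $A\cap H$ with product $(-1,-1)$ yields $m=p_1\cdots p_s$ with $s$ odd, hence $m\equiv -1\pmod{Lk\alpha}$ and $p_i+1=d_ik\alpha\mid m+1$ for each $i$.
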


\begin{proof}
First, define
$$A=\{(p,-1):\mbox{ }p\in\mathcal P\}.$$
Clearly, $|A|>s(G).$  So $A\cap H$ is non-empty.  Thus, let $p_H$ be a prime such that $(p_H,-1)\in A\cap H$.  Consider
\[h=(p_H,-1)^j=(p_H^j,(-1)^j)\]
Since $p_H$ is a quadratic non-residue modulo each prime $q$ that divides $L$, we have
\[p_H^j\equiv (p_H^{\frac{q-1}{2}})^\frac{j}{\left(\frac{q-1}{2}\right)}\equiv (-1)^\frac{j}{\left(\frac{q-1}{2}\right)}\equiv -1\pmod q\]
for each of these $q$ (since $j$ is odd).  Moreover, since $j$ is odd, $(-1)^j=-1$.  Thus
\[h=(-1,-1).\]
For the second half of the theorem, let us consider a subset $\{p_1,...,p_s\}$ of $\mathcal P$ such that
\[(p_1,-1)\times (p_2,-1)...\times (p_s,-1)=h.\]
Clearly, such a subset of $\mathcal P$ exists by Theorem \ref{thm}.  Let
\[m=p_1...p_s.\]
We note first that $s$ must be odd, since $(-1)^s=-1$ (as a result of the fact that the latter coordinate in $h$ is -1).  As the primes in $\mathcal P$ are all of the form $dk\alpha-1$ (i.e. $-1$ (mod $k$)), this means that our $m$ must be -1 (mod $k$) as well.  Moreover, examining $m$ modulo $L$ yields
$$m \equiv p_1p_2...p_s \equiv -1 \pmod L.$$
Thus, for any prime divisor $p_i$ of $m$, $p_i+1=dk|Lk|m+1$.  Moreover, by our construction of $\mathcal P$, we know that $\alpha|p_i+1$.  So $m$ is an elliptic Carmichael number.
\end{proof}

Let us now find the lower bound on the asymptotic.  We will proceed similarly to the paper of \cite{Wr}.  First, we must choose a $t$ such that $s(G)<t < n -n(G)$.  As such, we will let
\[t=\frac{\left(\frac 65\right)^{\omega(L)}}{60\cdot \phi(\alpha)\log x}.\]
\begin{varthm1}
Let $\mathcal N(X)$ be the number of elliptic Carmichael numbers up to $X$.  Then there exists a constant $K>0$ such that
\[\mathcal N(X)\gg \left(X\right)^\frac{K}{(\log\log\log X)^2}.\]
\end{varthm1}
\begin{proof}
Since we chose $t$ such that $s(G)<t<|\mathcal P|-n(G)$, we know that the number of products of at most $t$ primes in $|\mathcal P|$ whose product is -1 mod $L$ is at least
\[\left(\begin{array}{c} |\mathcal P|-n(G)\\ t-n(G)\end{array}\right)/\left(\begin{array}{c} |\mathcal P|\\n(G)\end{array}\right).\]
From Lemma \ref{lemfour2}, $n(G)$ is much smaller than both $\mathcal P$ and $t$.  So we can say that that
\[|\mathcal P|-n(G)\geq \frac 45|\mathcal P|,\label{eq1}\]
and
\[t-n(G)\geq \frac 23 t.\label{eq2}\]
We will also use the standard bound that
\[\left(\frac{u}{v}\right)^v\leq \left(\begin{array}{c} u\\v\end{array}\right)\leq \left(\frac{ue}{v}\right)^v.\label{eq3}\]
So
\begin{align*}
\left(\begin{array}{c} |\mathcal P|-n(G)\\ t-n(G)\end{array}\right)/& \left(\begin{array}{c} |\mathcal P|\\n(G)\end{array}\right)\\
\geq & \left(\begin{array}{c} |\mathcal P|-n(G)\\ t-n(G)\end{array}\right)/ \left(\begin{array}{c} |\mathcal P|\\ \frac t3\end{array}\right)\\
\geq &\left(\frac{\frac 45 |\mathcal P|}{t}\right)^{\frac 23 t}/\left(\frac{3e|\mathcal P|}{t}\right)^{\frac t3}\\
\geq &\left(\frac{16}{75e}\right)^{\frac 13 t}\left(\frac{|\mathcal P|}{t}\right)^{\frac 13 t}\\
\end{align*}
By the calculation of $|\mathcal P|$ and the definition of $t$, we have
\begin{align*}
\left(\frac{16}{75e}\right)^{\frac 13 t}&\left(\frac{|\mathcal P|}{t}\right)^{\frac 13 t}\\
\geq &\left(\frac{16\cdot 15}{75e}\right)^{\frac 13 t}\left(\frac{\left(\frac 32\right)^{\omega(L)}}{\left(\frac 65\right)^{\omega(L)}}\right)^{\frac 13 t}\\
\geq &\left(\left(\frac 54\right)^{\omega(L)}\right)^{\frac 13 t}.\\
\end{align*}
Since we chose $L$ to be the product of distinct primes, it is easy to give a lower bound for $\omega{L}$:
\[\omega(L)\geq \gamma\frac{y^{\theta}}{\log y^\theta}.\]
Define
\[X=x^t.\]
Using the bound for $\omega(L)$ above and the definition of $t$, we have that for $y$ sufficiently large,
\[(\log\log\log X)^2\geq \log y^\theta.\]
Moreover, from Lemma \ref{lemfour}, we have
\[X\leq (e^{\frac{2\kappa_2}{B} y^\theta})^{2t}.\]
Note also that any Carmichael number that is the product of at most $t$ primes in $\mathcal P$ must be less than $X$.  So the number derived from the combinatorial functions above is a lower bound for $\mathcal N(X)$.

As such, we have that, for sufficiently large values of $y$,
\begin{align*}
\mathcal N(X)\geq &\left(\left(\frac 54\right)^{\frac 13\omega(L)}\right)^{t}\\
\geq & \left(\left(e^{\frac{\kappa_1 y^\theta}{\log y^\theta}}\right)^t\right)^{\frac{\gamma\log(1.2)}{3}}\\
\geq & \left(X\right)^\frac{\gamma\log(1.2)B\kappa_1}{12\kappa_2(\log\log\log X)^2},
\end{align*}
where the second line is again from Lemma \ref{lemfour}.  The theorem then follows.
\end{proof}

\bibliographystyle{amsalpha}

\begin{thebibliography}{AGP94}
\bibitem[1]{AGP} W. R. Alford, A. Granville, and C. Pomerance, There are infinitely many Carmichael numbers, Ann. of Math. (2), 139(3) (1994), 703--722.
\bibitem[2]{BH} R. C. Baker and G. Harman, Shifted primes without large prime factors, Acta Arith.
83(4) (1998), 331--361.
%\bibitem[3]{BP} W. D. Banks and C. Pomerance, On Carmichael numbers in arithmetic progressions, J. Aust.
%Math. Soc. 88(3) (2010), 313--321.
\bibitem[3]{BS} R. C. Baker and W. M. Schmidt, Diophantine problems in variables restricted to the values
0 and 1, J. Number Theory 12(4) (1980), 460--486.
\bibitem[4]{Ca} R. D. Carmichael, Note on a new number theory function, Bull.
Am. Math. Soc. 16 (1910), 232--238.
\bibitem[5]{EPT} A. Ekstrom, C. Pomerance and D.S. Thakur, Infinitude of elliptic Carmichael numbers,
J. Aust. Math. Soc. 92 (2012) 45--60.
\bibitem[6]{EK} P. Van Emde Boas and D. Kruyswijk, A combinatorial problem on finite Abelian groups III,
Zuivere Wisk. (1969) (Math. Centrum, Amsterdam).
\bibitem[7]{Erd} P. Erd\H{o}s, On pseudoprimes and Carmichael numbers, Publ. Math. Debrecen 4 (1956), 201--206.
\bibitem[8]{Go1} D. M. Gordon, Pseudoprimes on elliptic curves, Proc. Internat. Number Theory Conf., Laval 1987 (1989), 291--305.
\bibitem[9]{Go2} D. M. Gordon, On the number of elliptic pseudoprimes, Math. Comp. 52(185) (1989), 231--245.
\bibitem[10]{GP} A. Granville and C. Pomerance, Two contradictory conjectures concerning Carmichael numbers, Math. Comp. 71 (2002), 883--908.
\bibitem[11]{Ha1} G. Harman, On the number of Carmichael numbers up to x, Bull. London Math. Soc.
37(5) (2005), 641--650.
\bibitem[12]{Ha2} G. Harman, Watt's mean value theorem and Carmichael numbers, Int. J. Number Theory
4(2) (2008), 241--248.
\bibitem[13]{Ko} A. Korselt, Probl\`eme chinois, L'interm\'edinaire des math\'ematiciens 6 (1899),
142--143.
\bibitem[14]{Ma} K. Matom\"{a}ki, On Carmichael numbers in arithmetic progressions, J. Aust. Math. Soc. 2 (2013), 1--8.
\bibitem[15]{Me} R. Meshulam, An uncertainty inequality and zero subsums, Discrete Math. 84(2) (1990), 197--200.
\bibitem[16]{RS} J. B. Rosser and L. Schoenfeld, Approximate formulas for some functions of prime numbers, Illinois J. Math. 6 (1962), 64--94.
%\bibitem[18]{Si1} J. H. Silverman, The arithmetic of elliptic curves, Graduate texts in mathematics (106), Springer Verlag, New York (1986).
\bibitem[17]{Si2} J. H. Silverman, Elliptic Carmichael numbers and elliptic Korselt criteria, Acta Arith. 155
(2012), 233--246.
\bibitem[18]{Wr} T. Wright, Infinitely many Carmichael numbers in arithmetic progressions, Bull. Lond. Math. Soc., 45 (2013), 943--952. .
\end{thebibliography}

\end{document}